\theoremstyle{plain}
\numberwithin{equation}{section}
\newtheorem{thm}{Theorem}[section]
\newtheorem{prop}[thm]{Proposition}
\newtheorem{lemm}[thm]{Lemma}
\newtheorem{cor}[thm]{Corollary}
\theoremstyle{remark}
\newcommand{\bq}{\begin{equation}}
\newcommand{\eq}{\end{equation}}
\begin{document}

\title{Sharp $L^p$ estimates for singular transport equations}
\author{Tarek M. Elgindi}

\address{Department of Mathematics, Princeton University.}
\email{tme2@math.princeton.edu}
\date{June 18, 2014}
\maketitle

\begin{abstract}
We prove that $L^p$ estimates for a singular transport equation are sharp by building what we call a \emph{cascading solution};  the equation we consider studies the combined effect of multiplying by a bounded function and application of the Hilbert transform. Along the way we prove an invariance result for the Hilbert transform which could be of independent interest. Finally, we give an example of a bounded and \emph{incompressible} velocity field $u$ for which the equation:

$$\partial_t f +u\cdot\nabla f= H(f)$$ develops sharp $L^p$ growth. 
The equations we study are relevant, as models, in the study of fluid equations as well as in general relativity.  

\end{abstract}

\setstretch{1.2}

\section{Background}

The interaction between transport and non-locality is found in a variety of physical applications such as fluid mechanics, general relativity, biological aggregation, etc.  A particular equation we may consider is the 2-D Euler equations of incompressible flow:

$$\partial_t u+(u\cdot\nabla)u=-\nabla p,$$
$$\text{div}(u)=0,$$

where the pressure term, $p$, satisfies:

$$\Delta p=\sum_{i\not=j}\partial_{i}u_{j}\partial_{j}u_{i}.$$ Clearly, the operator taking $u$ to $p$ is non-local. In this system, $u$ is the (vector) velocity field of the fluid and it is advected by itself under no external forces other than internal pressure. 

One of the main difficulties here is the interaction between the transport operator $\partial_{t} +(u\cdot\nabla)$ and the non-local pressure term.  In two-dimensions, it is very useful to get rid of the pressure term by passing to the equation for the vorticity, $\omega:=curl(u)=\partial_y u_1-\partial_x u_2:$

$$\omega_{t}+u\cdot\nabla\omega=0.$$

One of the important features of the vorticity equation is the conservation of all the $L^p$ norms of $\omega:$

$$|\omega(t)|=|\omega_{0}|_{L^p}\,\, \forall 1<p\leq \infty, \,\, \forall t>0.$$

This fact (when $p=\infty$) allows one to prove global existence and uniqueness of smooth solutions to the incompressible Euler equations in two dimensions (\cite{Hol},\cite{Wol}). The simple structure of the vorticity equation allows us, more or less, to bypass the problem of non-locality which is introduced by the pressure term. On the other hand,  as was shown in \cite{EM}, the conservation of the $L^p$ norms for the vorticity is very unstable with respect to very mild perturbations. Indeed, consider:

\begin{equation} \partial_{t}\omega+u\cdot\nabla\omega=R(\omega),\end{equation}
\begin{equation} curl(u)=\omega, \end{equation}
\begin{equation} div(u)=0, \end{equation}
where $R$ is some non-local degree-zero operator such as a Riesz transform. This equation has become of great interest due to the fact that it can be seen as an inescapable model for many problems arising in fluid mechanics. See, for example \cite{CV}, \cite{ER}, \cite{LM}, \cite{HR}, \cite{H}. 

The question of global well-posedness is wide open for (1.1)-(1.3). 
As was shown in \cite{EM}, an $L^\infty$ estimate is impossible if $R$ is unbounded on $L^\infty$, which is often the case. As a first approach towards proving well-posedness one might try to get good $L^p$ estimates on $\omega$.  A naive approach to doing the $L^p$ estimate yields the following estimate:

\begin{equation} |\omega(t)|_{L^p}\leq e^{pt} |\omega_0|_{L^p}.\end{equation}

The fact that the $p$ shows up in the exponent is quite alarming because it indicates that the only estimates we could get on $\omega$ would be in spaces which are subcritical with respect to the scaling of the Euler equations. Indeed, to prove well-posedness for (1.1)-(1.3) one would need an estimate on $\omega$ in a space that scales like $L^\infty$ (such as $BMO$) \cite{BKM}. Thus, even though we have an estimate on $\omega$ in $L^p$ for all $p$, the speed at which the $L^p$ norms grow as $p$ becomes large becomes enormously important. If it were possible to strengthen estimate (1.4) to, for example:

$$ |\omega(t)|_{L^p}\leq p C(t) |\omega_0|_{L^p} $$ then one might be in a position to prove global well-posedness for smooth solutions of (1.1)-(1.3).  One might be skeptical that such a growth as in (1.4) is possible due to the fact that a transport equation with a divergence free velocity field does not increase the $L^p$ norm of the initial data so one should expect that the $L^p$ norms should not grow faster than the growth found in the linear equation $\omega_t=R(\omega)$ (whose solutions grow in $L^p$, at worst, like $p C(t)$).

Upon some deeper thought, one \emph{could} imagine a scenario where the velocity field and the operator $R$ work together to produce uncontrollable growth. In this work we investigate the combined \emph{linear} effect of transport and the application of a singular integral operator and show that this is indeed possible.

\subsection{The main results}

We begin by studying the following linear equation: 

$$\partial_{t}f(x,t)=H(af)(x,t),$$
$$f(x,0)=f_{0}(x),$$

where $a$ is a given $L^\infty$ function and $H$ is the Hilbert transform on the real line. 

Let us note that when $a\equiv 1$, one can solve the evolution equation exactly: $$u(x,t)=u_0(x)cos(t)+H(u_0)sin(t)$$ so that $$|u|_{L^p}\lesssim (1+pt)|u_0|_{L^p}$$ locally in time for $p\geq 2$. 

Hence, for the case $a\equiv 1$, the $L^p$ norms of the equation can grow at most linearly in $p.$

Now we turn to the case of general $a$. By Trotter's formula, one can interpret solutions of this equation as a continuous piecing together of solutions of 

$$\partial_t f=H(f),\,\,\, \text{and} \,\,\, \partial_t f=af.$$

Each of these equations, individually, is harmless.  Specifically, solutions of $f_{t}=H(f)$ satisfy the bound $|f|_{L^p}\leq C(t)p|f_0|_{L^p}$ and solutions of $f_t=af$ satisfy $|f|_{L^p}\leq C(a,t)|f_0|_{L^p}.$ Together, however, they are able to produce very non-trivial behavior. In the coming theorem, we will see that putting the two effects together--multiplication on the Fourier side by a bounded function and multiplication on the physical side by a bounded function--can produce $L^p$ growth of the order of $e^{pt}!$

\begin{thm}

Consider the following evolution equation for $(x,t)\in\mathbb{R}\times\mathbb{R}^+:$

\begin{equation} \partial_t f =H(\chi_{[0,1]}f)\end{equation}
\begin{equation} f(x,0)=f_0(x)\end{equation}

Then, if $f_0\in L^p,$ $f(t)$ remains in $L^p$ for all time with the following bound:

\begin{equation} |f(t)|_{L^p}\leq |f_{0}|_{L^p} e^{cpt}\end{equation} for some universal $c>0.$

Furthermore, this bound is sharp: there exists $f_0\in L^1\cap L^\infty$ such that:

$$|f(t)|_{L^p}\geq |f_0|_{L^p}e^{cpt}$$ for some $c>0.$ In particular, (1.1)-(1.2) is ill-posed in all spaces at the scaling of $L^\infty,$ including BMO.

\end{thm}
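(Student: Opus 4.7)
For the upper bound, I would carry out the standard $L^p$ energy estimate. Testing the evolution equation against $|f|^{p-2}f$ and applying H\"older gives
$$\tfrac{d}{dt}\|f\|_{L^p}^p \;\leq\; p\,\|f\|_{L^p}^{p-1}\,\|H(\chi_{[0,1]}f)\|_{L^p}.$$
The trivial bound $\|\chi_{[0,1]}f\|_{L^p}\leq\|f\|_{L^p}$ combined with Pichorides' sharp estimate $\|H\|_{L^p\to L^p}\leq Cp$ (for $p\geq 2$; the range $1<p<2$ follows by duality) produces $\tfrac{d}{dt}\|f\|_{L^p}\leq Cp\,\|f\|_{L^p}$, and Gronwall then yields $\|f(t)\|_{L^p}\leq\|f_0\|_{L^p}e^{Cpt}$.

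For sharpness, the task is to produce an initial datum $f_0\in L^1\cap L^\infty$ whose orbit under $A:=H\circ\chi_{[0,1]}$ saturates the upper bound. Since $\|A\|_{L^p\to L^p}\sim p$, saturation requires an approximate eigenfunction of $A$ with eigenvalue of real part of order $cp$. My plan is to locate such a quasi-eigenfunction by exploiting scale invariance: inside $(0,1)$ the cutoff disappears so the equation reduces to $\partial_t f=H(f)$, and after the logarithmic substitution $x=e^{-y}$ and the $L^p$-isometric rescaling $\tilde g(y):=e^{-y/p}f(e^{-y})$ the operator becomes translation-invariant on $(0,\infty)$ up to a compact perturbation supported near the endpoints $x=0,1$. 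This reduction is almost certainly the \emph{invariance result for the Hilbert transform} promised in the abstract, and in this diagonalized picture the spectrum of $A$ on $L^p$ is expected to reach order $p$.

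I would then assemble the cascading initial datum as a superposition of wave packets concentrated on well-separated dyadic shells $\{x\sim 2^{-n}\}_{n\geq 1}$, each tuned to the frequency where the translation-invariant symbol achieves its maximal real part, with amplitudes decaying in $n$ so that $f_0\in L^1\cap L^\infty$. Translation invariance then lets each packet evolve essentially in isolation and grow in $L^p$ at the uniform rate $e^{cpt}$, simultaneously for every $p>1$. The main obstacle is controlling the compact endpoint perturbation: its logarithmic tails near $x=0$ and $x=1$ couple distant shells, and one must verify that they neither destroy the cascade nor attenuate the exponential rate. Absorbing them seems to be the technical heart of the argument, and is where the auxiliary invariance lemma should do the heavy lifting.
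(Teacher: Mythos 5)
Your upper bound argument (energy estimate plus Pichorides and Gronwall) is fine and matches the paper's stated source of $e^{cpt}$; that half is uncontroversial.

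The sharpness half, however, is a plan rather than a proof, and two of its load-bearing claims do not hold up. First, you have misread the ``invariance result'' advertised in the abstract: in the paper it is \emph{not} a statement about conjugating $H$ to a translation-invariant operator via a logarithmic change of variables. It is Corollary 2.2, a purely algebraic closure statement: with $a=\chi_{[0,1]}$, the set $\Omega_a$ of $\ell^1$-series in $H(a)^j$ and $aH(a)^j$ is mapped to itself by $H$. The whole construction then lives inside this finitely-generated algebra (Lemma 2.1 gives the recursion $H(aH(a)^k)=\tfrac{1}{k+1}H(a)^{k+1}+\sum_{j<k}(b_j^k+c_j^k a)H(a)^j$ with coefficient bounds $|b_j^k|,|c_j^k|\le k^{k-j}$), the evolution becomes an explicit infinite ODE system for the Taylor coefficients $\alpha_j,\beta_j$, Lemma 2.3 bootstraps lower bounds $\gamma_k(t)\gtrsim t^k/(k!)^2$, and Lemma 2.4 shows that the resulting profile $\sum_k t^k(\log|x/(x-1)|)^k/(k!)^2$ already has $L^p$ norm $\gtrsim e^{ctp}$ on $[\tfrac12,1]$. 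The initial datum that works is simply $f_0=\chi_{[0,1]}$; no dyadic wave packets are used, and the $p$-uniformity is automatic because the same cascading solution is estimated for every $p$ in Lemma 2.4.

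Second, your heuristic starting point is inaccurate: for $x\in(0,1)$ the equation does \emph{not} reduce to $\partial_t f=H(f)$, since $H(\chi_{[0,1]}f)(x)$ is the finite Hilbert transform $\tfrac1\pi\,\mathrm{p.v.}\int_0^1\frac{f(y)}{x-y}\,dy$, which differs from $H(f)(x)$ by the contribution of $f$ outside $[0,1]$. The cutoff is precisely what creates the logarithmic singularities at $x=0,1$ and hence the growth; it cannot be treated as a removable compact perturbation. Relatedly, the constant-coefficient symbol of $H$ is $-i\,\mathrm{sgn}(\xi)$ with bounded real part, so the ``eigenvalue of real part $\sim cp$'' you seek is not a feature of the symbol alone; the factor of $p$ in the paper comes from the $L^p$ integration of an iterated-logarithm profile (Lemma 2.4), not from a frequency tuned to a Pichorides-extremizing quasi-eigenfunction. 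Finally, the part you flag as ``the technical heart''---absorbing the endpoint coupling between shells accumulating at $x=0$---is left entirely open, and it is exactly where a wave-packet superposition would run into trouble, since every shell sits on top of the singular endpoint. Until those steps are carried out, the sharpness claim is unproved.
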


A corollary of the proof of Theorem 1.1, which will be discussed in Section 3, is:

\begin{thm}
Consider the evolution equation$$\partial_{t} f+ \big(0,\chi_{[0,1]}(x)\big)\cdot\nabla_{x,y} f=H_{x}f$$ with $$f(t=0,x,y)=e^{y}\chi_{[0,1]}(x).$$

Then, $$f(t,x,y)=e^y M(t,x)$$ with $$|M(t)|_{L^p}\geq e^{ct^2 p}.$$

\end{thm}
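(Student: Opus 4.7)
The plan is to reduce the two-dimensional PDE to a one-dimensional equation via an exponential ansatz in $y$, and then to invoke the sharp cascading construction that proves Theorem 1.1.

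First, I would substitute the ansatz $f(t,x,y) = e^y M(t,x)$ into the PDE. The transport term gives $(0,\chi_{[0,1]}(x))\cdot\nabla_{x,y}(e^y M) = \chi_{[0,1]}(x)\, e^y M$, and because $H_x$ acts only in $x$ it commutes with the $x$-independent factor $e^y$, yielding $H_x f = e^y H_x M$. Cancelling the common $e^y$, the equation and initial condition reduce to
$$\partial_t M + \chi_{[0,1]}(x)\, M = H_x M, \qquad M(0,x) = \chi_{[0,1]}(x).$$
The ansatz is thus preserved by the flow, so it remains to prove the lower bound on $|M(t)|_{L^p}$.

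The reduced equation exhibits the same interplay between the Hilbert transform and the indicator $\chi_{[0,1]}$ as in Theorem 1.1, the essential difference being that the initial datum is now the plain indicator rather than a specially-prepared cascade. My strategy is to show that the PDE generates its own cascade: the interaction of $H_x$ with the endpoints of $[0,1]$ creates a new concentrated scale at rate roughly one per unit time. Once such a scale has been created at time $s$, the Theorem 1.1 mechanism drives its $L^p$ norm to grow by a further factor of order $e^{cp(t-s)}$ by time $t$. The invariance property for the Hilbert transform promised in the abstract should ensure that scales created at different times remain essentially independent in frequency and thus contribute multiplicatively.

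Accumulating the contributions from the cascade levels manufactured on $[0,t]$ then yields
$$\log |M(t)|_{L^p} \gtrsim cp\int_0^t (t-s)\,ds = \frac{cp\,t^2}{2},$$
which is the claimed bound (with a new universal $c$). The main obstacle will be making this accumulation rigorous: one must show that the cascade levels generated successively by the dynamics do not interfere destructively, and that the Theorem~1.1 lower bound genuinely applies at each newly-created scale with the correct time-to-live. This requires a careful book-keeping of the frequency-localised components of $M(t)$ and will rely crucially on the Hilbert-transform invariance result, which plays the role of guaranteeing that the ``product'' structure across scales survives the dynamics rather than being washed out.
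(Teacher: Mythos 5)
Your reduction to the one-dimensional equation $\partial_t M + \chi_{[0,1]} M = H M$ with $M(0,\cdot) = \chi_{[0,1]}$ is correct and matches the paper's first step. After that your argument is only a heuristic, and the heuristic contains a step that cannot be made to work.

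The fatal gap is the ``multiplicative accumulation across scales.'' You propose that cascade levels created at times $s \in [0,t]$ each contribute a factor $e^{cp(t-s)}$ multiplicatively to the $L^p$ norm, so that $\log|M(t)|_{L^p} \gtrsim cp\int_0^t(t-s)\,ds$. But $L^p$ norms are subadditive, not multiplicative, across additive components: if $M = \sum_i M_i$ with the $M_i$ ``independent in frequency,'' one gets at best $|M|_{L^p}\lesssim\sum_i|M_i|_{L^p}$, which is dominated by the largest single term $e^{cpt}$, not the product $e^{cpt^2/2}$. The invariance result (Corollary 2.2) cannot rescue this: it only says the ansatz class $\Omega_a$ of expansions in powers of $\log|x/(x-1)|$ is preserved by $H$, and gives no independence or multiplicativity of contributions. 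Moreover, the reduced operator $H - \chi_{[0,1]}$ is not the composition $H(\chi_{[0,1]}\,\cdot)$ of Theorem 1.1, so the Theorem~1.1 growth rate does not transfer to your ``newly created scales'' without further argument. (Incidentally, Theorem 1.1's sharp lower bound is already achieved with initial datum $\chi_{[0,1]}$, so the contrast you draw with a ``specially-prepared cascade'' is not accurate.)

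The paper's actual proof proceeds quite differently. One multiplies by the integrating factor $e^{t\chi_{[0,1]}(x)}$, setting $M = e^{t\chi}g$, and uses $e^{\pm t\chi} = 1+(e^{\pm t}-1)\chi$ to rewrite the equation as one involving $H(M)$, $\chi H(M)$, $H(\chi M)$ and $\chi H(\chi M)$ with prefactors of order $1$, $t$, $t$ and $t^2$ respectively for small $t$. The order-one term $H(M)$ is non-cascading (since $H^2=-1$), while the genuinely cascading terms carry the extra factor of $t$. Plugging in the expansion $M = \sum_k (\alpha_k + \beta_k\chi)\big(\log|x/(x-1)|\big)^k$ and invoking Lemma 2.1 then gives a recursive ODE system with dominant coupling $\frac{d}{dt}\beta_k \approx \frac{t}{\pi(k-1)}\beta_{k-1}$, hence $\beta_k\sim (t^2/2\pi)^k/(k!)^2$; that additional factor $t$ per step, not an accumulation over starting times, is the real source of the $t^2$. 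A bootstrap as in Lemma 2.3 controls the error tail, and Lemma 2.4 applied with $t$ replaced by $t^2$ yields $|M(t)|_{L^p}\geq e^{ct^2p}$. Your proposal contains none of these ingredients — the integrating factor, the explicit expansion and ODE system, or the bootstrap — and the heuristic offered in their place is not a valid substitute.
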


On the other hand, the following two theorems are true:

\begin{thm}
Let $u$ be a divergence-free and Lipschitz function. Then, if $f$ solves \begin{equation} \partial_t f+u\cdot\nabla f=0\end{equation} with $f_0\in BMO$, then
$$|f(t)|_{BMO}\leq C(|u|_{Lip},t)|f_0|_{BMO},$$ and 

$$|f(t)|_{L^p}\leq C(|u|_{Lip},t)p|f_0|_{BMO}.$$

\end{thm}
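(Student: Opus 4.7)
The plan is to use the method of characteristics. Since $u$ is Lipschitz, the flow map $\Phi_t$ defined by $\tfrac{d}{dt}\Phi_t(x)=u(t,\Phi_t(x))$, $\Phi_0=\mathrm{Id}$, exists globally and, by Gronwall, is bi-Lipschitz with constant $L:=e^{|u|_{Lip}\,t}$. The divergence-free hypothesis makes $\Phi_t$ measure-preserving, and the unique solution of the transport equation is $f(t,x)=f_0(\Phi_t^{-1}(x))$. Everything then reduces to showing that composition with a measure-preserving, bi-Lipschitz homeomorphism distorts the $BMO$ seminorm only by a dimensional factor depending on $L$.

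For the $BMO$ bound I would fix a ball $B=B(x_0,r)$, set $E:=\Phi_t^{-1}(B)$, $y_0:=\Phi_t^{-1}(x_0)$, and change variables using $|E|=|B|$ to write
\begin{equation*}
\frac{1}{|B|}\int_B |f(t,x)-(f(t))_B|\,dx \;=\; \frac{1}{|E|}\int_E |f_0(y)-(f_0)_E|\,dy.
\end{equation*}
The bi-Lipschitz bound yields the sandwich $B(y_0,r/L)\subset E\subset \tilde B:=B(y_0,Lr)$. Replacing the mean $(f_0)_E$ by the mean $(f_0)_{\tilde B}$ (which costs a factor of at most $2$ in the $L^1$ sense) and using $|\tilde B|/|E|\leq L^{2d}$, one arrives at
\begin{equation*}
\frac{1}{|E|}\int_E |f_0 -(f_0)_E|\,dy \;\leq\; \frac{2|\tilde B|}{|E|}\cdot \frac{1}{|\tilde B|}\int_{\tilde B}|f_0-(f_0)_{\tilde B}|\,dy\;\leq\; 2L^{2d}\,\|f_0\|_{BMO}.
\end{equation*}
Taking the supremum over $B$ gives the first inequality with $C(|u|_{Lip},t)=2e^{2d|u|_{Lip}\,t}$.

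For the $L^p$ estimate, note first that measure-preservation of $\Phi_t$ gives $\|f(t)\|_{L^p}=\|f_0\|_{L^p}$, so no genuine dynamics is needed here; the $p$-growth is a purely functional-analytic fact. One then invokes the John--Nirenberg inequality, which in the appropriate setting (on a bounded domain, or for localized/periodic data as the paper implicitly assumes) yields $\|g\|_{L^p}\leq Cp\,\|g\|_{BMO}$. Applying this to $g=f(t)$ and combining with the $BMO$ bound already obtained produces the desired $C(|u|_{Lip},t)\,p\,\|f_0\|_{BMO}$ estimate.

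The only real technical point is the sandwich step: the image of a ball under a bi-Lipschitz map is not itself a ball, so one must control the oscillation over the distorted blob $E$ by the oscillation over an enclosing true ball. That is precisely what the double-ball comparison accomplishes; everything else is a routine change of variables and John--Nirenberg.
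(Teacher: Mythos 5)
The paper does not prove Theorem~1.3: it is stated without argument, and the references \cite{BK}, \cite{BEK} are cited only for the sharpness remark that the Lipschitz hypothesis cannot be weakened. The reference \cite{BK} (\emph{Sharp constants for composition with a bi-Lipschitz measure-preserving map}) is exactly the $BMO$-invariance you re-derive, so your approach is the intended one. Your argument is correct: the reduction by flow-map composition $f(t,\cdot)=f_0\circ\Phi_t^{-1}$, the change of variables via measure preservation to rewrite the mean oscillation over a ball $B$ as that over $E=\Phi_t^{-1}(B)$, the sandwich $B(y_0,r/L)\subset E\subset B(y_0,Lr)$, the standard factor-of-two cost for replacing $(f_0)_E$ with $(f_0)_{\tilde B}$, and John--Nirenberg for the $L^p$ bound are all sound. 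Two minor points. First, the volume ratio is $|\tilde B|/|E|=|\tilde B|/|B|=L^d$, not $L^{2d}$; your bound is correct but with a wasteful exponent. Second, you correctly flag that on all of $\mathbb{R}^d$ the John--Nirenberg step needs a localization convention, since constants lie in $BMO$ but not in $L^p$; the paper leaves this implicit, and a clean fix is to assume additionally $f_0\in L^p$ (in which case measure preservation already gives $|f(t)|_{L^p}=|f_0|_{L^p}$) and interpret the $L^p$ claim as a statement about local oscillation, or to work on a bounded domain or the torus. In short, your proof supplies an argument that the paper omits, and it is the right one.
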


We remark that theorem 1.3 is sharp in the sense that if $u$ is not Lipschitz then such a theorem is, in general, not true (see \cite{BK} and \cite{BEK} ).

Theorem 1.1 seems to indicate that there exists $u\in Lip$ with $div(u)\not=0$ so that solutions of the transport equation above actually grow like $e^{pt}.$ It seems like one might be able to achieve this by refining the techniques of this paper. 

\begin{thm}
Let $a$ be a Dini continuous function of a real variable. Let $f$ solve the evolution equation $$\partial_{t} f(t,x)=H(af)(x,t)$$
$$f(x,0)=f_0(x)$$ with $f_0\in L^p.$

Then, 

$$|f|_{L^p}\lesssim_{a,t}p|f_0|_{L^p}.$$

\end{thm}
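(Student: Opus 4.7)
The plan is to derive the sharp $O(p)$ growth by interpolating between an easy $L^2$ estimate and a $BMO$ estimate for the evolution operator $S_t: f_0 \mapsto f(t)$; the factor of $p$ in the final bound comes entirely from the John--Nirenberg embedding $BMO \hookrightarrow L^p_{\text{loc}}$, in perfect analogy with the sharp bound $\|H\|_{L^p\to L^p}\lesssim p$ for the Hilbert transform itself.

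The $L^2$ step is routine. Since $H^*=-H$,
\[
\tfrac{1}{2}\tfrac{d}{dt}\|f\|_{L^2}^2 = \langle f, H(af)\rangle = -\langle Hf, af\rangle \le \|a\|_{L^\infty}\|f\|_{L^2}^2,
\]
so $\|S_t\|_{L^2\to L^2}\le e^{\|a\|_{L^\infty} t}$ independently of $p$. For the $BMO$ step I would use two classical facts: $H:BMO\to BMO$ is bounded, and a Dini continuous bounded function is a pointwise multiplier on $BMO$. The latter is where Dini continuity is essential: by Stegenga's characterization of $BMO$-multipliers, $a$ multiplies $BMO$ as soon as $\omega(t)\log(1/t)\to 0$ where $\omega$ is its modulus of continuity; a monotone-rearrangement and Cauchy-condensation argument shows that Dini (i.e.\ $\int_0^1\omega(t)/t\,dt<\infty$) implies exactly this decay. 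Granting the multiplier property, $\|H(af)\|_{BMO}\lesssim \|af\|_{BMO}\le C(a)\|f\|_{BMO}$, and Gronwall yields $\|S_t\|_{BMO\to BMO}\le e^{C(a)t}$.

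With the two endpoint bounds in hand, I would apply the standard Marcinkiewicz/sharp-function interpolation between $L^2\to L^2$ and $L^\infty\to BMO$, which produces $\|S_t\|_{L^p\to L^p}\le C(a,t)\,p$ for $p\ge 2$, the factor of $p$ entering precisely through John--Nirenberg. The range $1<p\le 2$ follows by duality, since the $L^2$-adjoint equation $\partial_t g=-aHg$ has the same structure (with $a$ replaced by $-a$).

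The main obstacle is the $BMO$ step, and specifically the passage from Dini continuity to the $BMO$-multiplier property with a constant depending only on $\|a\|_{L^\infty}$ and the Dini modulus of $a$. This is precisely where the regularity hypothesis enters, and its sharpness is consistent with Theorem~1.1: there $a=\chi_{[0,1]}$ is $L^\infty$ but fails to be Dini, and one sees the $L^p$ norms grow like $e^{cpt}$. A minor technical point is rigorously justifying the $BMO$ differential inequality above, which can be handled by regularizing $a$ and $f_0$ (for instance by mollification and truncation), working in the smooth category, and passing to the limit using the uniform bounds.
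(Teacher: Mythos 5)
Your approach is genuinely different from the paper's and, in outline, sound, but let me flag a few points. The paper's proof rides on a single elementary fact: for Dini $a$, the commutator $[a,H]f := aHf - H(af)$ is bounded on $L^\infty$ (hence, with the standard $L^2$ bound and interpolation, on $L^p$ with norm uniform in $p$). One then studies the coupled system for $f$ and $g := Hf$, namely $\partial_t f = ag - [a,H]f$ and $\partial_t g = -af$ (using $H^2=-I$), and Gronwall gives $\|f(t)\|_{L^p} + \|g(t)\|_{L^p} \le (\|f_0\|_{L^p} + \|Hf_0\|_{L^p})e^{C(a)t}$. The single factor of $p$ enters only through $\|Hf_0\|_{L^p}\lesssim p\|f_0\|_{L^p}$. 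Your route instead proves $S_t: L^2\to L^2$ and $S_t: BMO\to BMO$ and interpolates, with $p$ coming from John--Nirenberg. This buys a conceptually clean statement, but it leans on a deeper black box (Stegenga's characterization of $BMO$ multipliers, together with the verification that Dini continuity implies the LMO-type decay $\omega(t)\log(1/t)\to 0$, plus the correct behavior at large scales), whereas the paper's commutator fact is a one-line kernel estimate once one writes $[a,H]f(x) = \mathrm{p.v.}\int \frac{a(x)-a(y)}{\pi(x-y)} f(y)\,dy$.

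Two corrections to your write-up. First, the $L^2$-adjoint of $f\mapsto H(af)$ is $g\mapsto -aHg$, so the dual evolution is $\partial_t g = aHg$; this is \emph{not} of the form $\partial_t g = H(\tilde a g)$ with $a$ replaced by $-a$ --- multiplication and $H$ appear in the opposite order, and the two differ precisely by the commutator $[a,H]$. Your argument still closes, since $g\mapsto aHg$ is $BMO$-bounded under the same multiplier hypothesis, but the claim ``same structure'' as stated is wrong. Second, the $BMO$ Gronwall step needs more than a passing remark: $BMO$ is a space modulo constants and the evolution only determines $f(t)$ up to an additive constant there, so you should fix a representative (say the one matching the genuine $L^p$ solution) and verify that the seminorm is absolutely continuous in $t$ before applying Gronwall; the mollification you sketch can carry this out but the compatibility between the $L^p$ and $BMO$ solutions is the crux and deserves to be spelled out.
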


\emph{Remark:} The proof of Theorem 1.4 is a simple application of the fact that if $a$ is dini continuous then the commutator $$[a,H]f:= aH(f)-H(af)$$ maps $L^\infty$ to itself. 

We remark that this equation was studied in two dimensions by Klainerman and Rodnianski \cite{KR}. Simply, they wanted conditions on $a$ such that the above "cascade" cannot happen in the $L^1$ case.
Their assumptions are: take $a\in B^{1}_{2,1}$ and solve the linear equation $$\partial_{t}f(t,x)=M(af)(t,x),$$ assuming that $M$ is a convolution operator with \emph{bounded} and \emph{smooth} symbol (excluding the Hilbert transform).  

Then,  $|f|_{L^1}\leq C(a)|f_0|_{L^1} Log^+|f_0|_{L^\infty}.$

The point here is that only one $Log$ term  is lost just like only one $p$ is lost in the $L^p$ estimate when $a$ is Dini continuous. It is likely that if $a$ were only bounded, then one could build a cascading solution in the $L^1$ case just as we do in the $L^p$ case. Finally, whether the result of \cite{KR} can be extended to the case where $M$ is the Hilbert transform is not clear to us. 

\subsection{The idea of the proof of Theorem 1.1}
The proof of Theorem 1.1 will involve several steps. First we will derive an exact formula (up to a recursion) for $H(\chi_{[0,1]}\Big(Log\big| \frac{x}{x-1}\big|\Big)^j)$. It turns out that one can expand this quantity as a sum of powers of $\Big(Log\big| \frac{x}{x-1}\big|\Big)^l.$ We will then use these formulas to provide an exact solution for (1.5)-(1.6) through a series expansion. The type of solution we will look for is:

$f(t,x)=\sum_{j=0}^{\infty}(\alpha_{j}(t)+\beta_{j}(t)\chi_{[0,1]})\Big(Log\big| \frac{x}{x-1}\big|\Big)^j$ with $\alpha_{j}(0)=\beta_{j}(0)=0$ for all $j$ except for $\beta_{0}(0)$ which is taken to be $1.$

The coefficients $\alpha_{j}(t)$ and $\beta_{j}(t)$ will solve an infinite system of ODE's. We do not attempt to solve the system or the recursion formulas exactly but we prove lower bounds on the coefficients based on a bootstrap argument. We will prove that $\alpha_{j}(t)+\beta_{j}(t)\geq\frac{t^j}{c(j!)^2}.$ This will be enough to conclude. The following figure illustrates the first steps of the "cascade"; we start with a bounded function and end with a function with a singularity of the order or $Log^2.$

\vspace{10mm}

\includegraphics{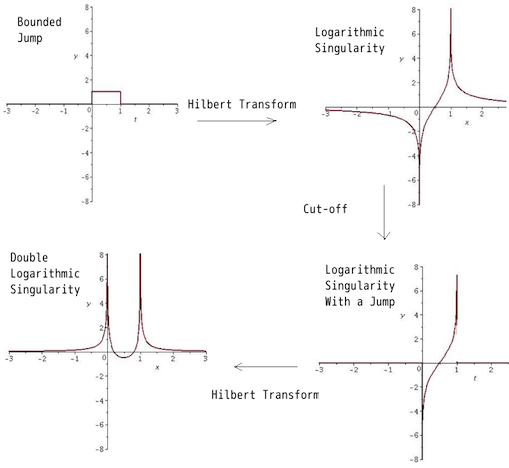}

\section{The proof of Theorem 1.1}
The proof will be based on three lemmas. 

\begin{lemm}

Denote by $a=:\chi_{[0,1]}$ the characteristic function of the unit interval. 
Then, by explicit calculation, $$H(a)=\frac{1}{\pi}Log\big|\frac{x}{x-1}\big |.$$

Furthermore, the following holds:

\begin{equation} H(aH(a)^k)= \frac{1}{k+1}H(a)^{k+1}+\sum_{j=0}^{k-1}(b^{k}_{j}+c^{k}_{j}a)H(a)^j.\end{equation}

$$b_{j}^k=c_{j}^k=0, \,\, \text{when} j-k \,\, \text{is even}$$

Moreover, $b_{j}^k$ and $c_{j}^k$ satisfy the following bounds:

$$|b_{j}^k|, |c_{j}^{k}|\leq k^{k-j}.$$

\end{lemm}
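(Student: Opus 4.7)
The plan is to treat the first identity $H(a) = \frac{1}{\pi}\log|x/(x-1)|$ as the direct evaluation of the principal value $\frac{1}{\pi}\,\mathrm{p.v.}\int_0^1 \frac{dy}{x-y}$, which requires no clever device. The main identity (2.1) I would instead establish by induction on $k$ driven by the classical Tricomi identity
\[
H(fHg) + H(gHf) = (Hf)(Hg) - fg.
\]
Setting $f = a$ and $g = aH(a)^k$, writing $h := H(a)$ and $A_k := H(ah^k)$, and using the crucial idempotency $a^2 = a$ at every opportunity, this yields the basic relation
\[
A_{k+1} + H(a A_k) = h A_k - a h^k.
\]
Substituting the inductive form $A_k = \tfrac{1}{k+1} h^{k+1} + \sum_{j<k}(b_j^k + c_j^k a) h^j$ into $H(aA_k)$ collapses (again by $a^2 = a$) to $\tfrac{1}{k+1}A_{k+1} + \sum_{j<k}(b_j^k + c_j^k) A_j$, after which solving for $A_{k+1}$ produces the closed recursion
\[
A_{k+1} = \frac{k+1}{k+2}\Bigl[h A_k - a h^k - \sum_{j<k}(b_j^k + c_j^k) A_j\Bigr].
\]
The top coefficient $\tfrac{1}{k+2}$ on $h^{k+2}$ then falls out automatically from $\tfrac{k+1}{k+2}\cdot\tfrac{1}{k+1}$, confirming the leading term at the next level.

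The parity assertion I would verify by a separate short induction on the same recursion: assume every $h$-exponent appearing in $A_k$ is congruent to $k+1 \pmod 2$. Then $hA_k$ shifts exponents by one, so they match $k+2$; the term $ah^k$ has exponent $k=(k+2)-2$, of the correct parity; and in $(b_j^k+c_j^k)A_j$ only $j$ with $k-j$ odd contribute by hypothesis, so each $A_j$ has exponents of parity $j+1$, which agrees with $k+2$ modulo $2$ since $j+1 \equiv k \pmod 2$. Thus every exponent in $A_{k+1}$ lies at parity $k+2$, giving the claim.

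The numerical bound $|b_j^k|,|c_j^k| \leq k^{k-j}$ is where the real work lies. From the recursion each new coefficient at level $k+1$ is a finite linear combination---involving the prefactor $\tfrac{k+1}{k+2}$, the previous $b_l^m, c_l^m$ for $m\leq k$, and sums of the form $\sum_j (b_j^k+c_j^k)\cdot(\text{coefficient of } h^l \text{ in } A_j)$---of products of lower-level coefficients. I would set up a strengthened induction hypothesis and estimate termwise. The main obstacle is keeping the estimate closed: the sum in the recursion has on the order of $k$ summands and the multipliers $(b_j^k+c_j^k)$ can themselves already be as large as $k^{k-j}$, so a naive bound loses extra factors of $k$ at every step. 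To close the induction at exactly the rate $k^{k-j}$ one must exploit either the sparsity from the parity restriction (which halves the number of contributing terms at each level) or a more refined combinatorial bookkeeping that trades the prefactor $\tfrac{k+1}{k+2}<1$ against the number of summands. I expect this bound-propagation step to be the only genuinely delicate piece of the argument; the structural identity and the parity are essentially forced by the recursion.
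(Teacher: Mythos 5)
Your derivation of the structural identity is essentially the paper's: you invoke the Tricomi identity in the (equivalent, $H^2=-I$-conjugated) form $H(fHg)+H(gHf)=(Hf)(Hg)-fg$, set $f=a$, $g=aH(a)^k$, and use $a^2=a$ to arrive at
\[
A_{k+1}=\frac{k+1}{k+2}\Bigl[hA_k - ah^k - \sum_{j<k}(b_j^k+c_j^k)A_j\Bigr],
\]
which, once $hA_k$ and the $A_j$ are re-expanded in the basis $\{h^l, ah^l\}$, is exactly the coefficient recursion the paper writes out. For the parity assertion you use an induction on that same recursion, whereas the paper notes this is possible but instead argues from the symmetry of $a=\chi_{[0,1]}$ about $x=1/2$ (so $H(a)$ is odd about $1/2$ and only terms of matching parity can appear). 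Both are valid, and your inductive parity check is sound.

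The genuine gap is the bound $|b_j^k|,|c_j^k|\le k^{k-j}$. You correctly identify this as the delicate step and correctly name two of the available gains (the prefactor $\frac{k+1}{k+2}<1$ and the parity sparsity, which kills the $l=n$ term and roughly halves the count in the sum), but you stop short of an argument that closes. What you are missing is the third and decisive source of slack: the inductive bound on $|b_{n-1}^{j-1}|$ is $(j-1)^{j-n}$, not $j^{j-n}$, and the ratio $(1-1/j)^{j-n}\le e^{-(j-n)/j}$ supplies exponential room to absorb both the additional factor $\frac{n-1}{n}$ from the $\bigl(1-\frac{1}{n}\bigr)b_{n-1}^{j-1}$ term and the entire double sum, once the latter is bounded by roughly $(j-n-1)(n+1)j^{j-n-2}$. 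The paper packages all of this into the single scalar inequality
\[
\Bigl(1+\tfrac{(j-n-1)(n+1)}{(j-1)^2}\Bigr)\Bigl(1-\tfrac{1}{j}\Bigr)^{j-n}\le 1,
\]
which it proves by the substitution $A=j/n$ together with $e^{-x}\le 1-x+\tfrac{1}{2}x^2$; without reducing to some such closed inequality (or an equivalent), the termwise estimate you propose does not obviously close at the exact rate $k^{k-j}$, and this is precisely where the real content of the lemma lies.
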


\emph{Remark:} One can check numerically that such an upper bound on the coefficients $b_j^k$ and $c_j^k$ seems to be sharp up to constants; moreover, it seems that all the $b_{j}^k$ and $c_{j}^k$ are negative. This is the difficulty we will face in proving the growth after this because it is possible that there be growth in "high modes" (logarithms with high powers) but then the system could collapse on itself by many "low modes" which are going in the opposite direction. In fact, one can already see this in the figure above.

\begin{proof}
The proof will proceed by deriving a recursion formula for the coefficients $b_{j}^k$ and $c_{j}^k.$ Just to get an idea of what is going on, we will explicitly calculate the cases $k=1$ and $k=2$.

We will be making use of the Tricomi identity \cite{King}:

$$H(fg)=gH(f)+fH(g) +H(H(f)H(g))$$ which is valid for any $f$ and $g$ that belong to, say, $L^2(\mathbb{R})$ \cite{Okada}.

Now we explicitly deal with the cases of $k=1$ and $k=2$.

$$H(aH(a))=H(a)^2-a^2-H(aH(a)).$$

Therefore,

$$H(aH(a))=\frac{1}{2}H(a)^2-\frac{1}{2}a^2.$$ In particular, the theorem holds for $k=1$.

Now for $k=2.$ We need to compute $H(aH(a)^2).$ 

We have that $$H(aH(a))=\frac{1}{2}H(a)^2-\frac{1}{2}a^2.$$

Now multiply by $a$ and take $H$ of both sides and we see:

$$H(aH(aH(a)))=\frac{1}{2} H(aH(a)^2)-\frac{1}{2}H(a).$$

Now use the Tricomi identity on the left-hand side and we see:

$$H(a)H(aH(a))-a^2H(a)-H(aH(a)^2)=\frac{1}{2}H(aH(a)^2)-\frac{1}{2}H(a)$$

Now the situation is clear:

$$H(a) (\frac{1}{2}H(a)^2-\frac{1}{2}a)-aH(a)+\frac{1}{2}H(a)=\frac{3}{2}H(aH(a)^2).$$

Therefore, 

$$H(aH(a)^2)=\frac{1}{3}H(a)^3-aH(a)+\frac{1}{3}H(a).$$

Thus the lemma is true for $k=2.$

Now we redo this procedure for the case $k=j$ assuming the theorem is true for $k\leq j-1.$ 

Indeed,

We know that  $$H(aH(a)^{j-1})=\frac{1}{j} H(a)^j +\sum_{n=0}^{j-2}(b_{n}^{j-1}+c_{n}^{j-1}a)H(a)^n$$

Now multiply by $a$ and apply $H:$

$$H(aH(aH(a)^{j-1}))=H(a\frac{1}{j} H(a)^j) +H\Big(\sum_{n=0}^{j-2}(b_{n}^{j-1}+c_{n}^{j-1})aH(a)^n\Big).$$

Now we use the product rule for $H$ on the left hand side. 

$$H(a)H(aH(a)^{j-1})-aH(a)^{j-1}-H(aH(a)^j)=H(a\frac{1}{j} H(a)^j) +H\Big(\sum_{n=0}^{j-2}(b_{n}^{j-1}+c_{n}^{j-1})aH(a)^n\Big).$$

Now, by the induction hypothesis, we know the formula for $$H(aH(a)^{n}), $$ when $n\leq j-1.$

Therefore $$H(a) \Big(  \frac{1}{j} H(a)^j +\sum_{n=0}^{j-2}(b_{n}^{j-1}+c_{n}^{j-1}a)H(a)^n\Big)-aH(a)^{j-1}$$ $$-\sum_{n=0}^{j-2}(b_{n}^{j-1}+c_{n}^{j-1})\big(\frac{1}{n+1}H(a)^{n+1}+\sum_{l=0}^{n-1}(b_{l}^n+c_{l}^na)H(a)^l \big)=\frac{j+1}{j} H(aH(a)^j) $$

Therefore, 

$$H(aH(a)^j)=\frac{1}{j+1}H(a)^{j+1}+\frac{j}{j+1}R$$ with $$R=\sum_{n=0}^{j-2}(b_{n}^{j-1}+c_{n}^{j-1}a)H(a)^{n+1}-aH(a)^{j-1}$$ $$-\sum_{n=0}^{j-2}\frac{H(a)^{n+1}}{n+1}(b_{n}^{j-1}+c_{n}^{j-1})-\sum_{n=0}^{j-2}\sum_{l=0}^{n-1}(b_n^{j-1}+c_n^{j-1})(b_l^n+c_l^n a)H(a)^l.$$

One observes, by induction, that assertion (2.1) is correct. Now we must check the bounds on $b_{n}^j$ in terms of $n$ and $j.$ We will only prove the bound for $b_{n}^n$. 

Now, by the definition of $b_n^j$ and $c_n^j$ we see:

$$b_{n}^{j}=\frac{j}{j+1}\Big(b_{n-1}^{j-1}-\frac{1}{n}b_{n-1}^{j-1}-\sum_{l=n}^{j-2}(b_{l}^{j-1}+c_{l}^{j-1})b_{n}^l\Big)$$
$$c_{n}^{j}=\frac{j}{j+1}\Big(c_{n-1}^{j-1}-1 -\sum_{l=n}^{j-2}(b_l^{j-1}+c_l^{j-1})c_{n}^l \Big)$$

The fact that $$b_{j}^k=c_{j}^k=0, \,\, \text{when}\,\, k-j \,\, \text{is even}$$ can likely be seen from the recurrence relations. However, one can see it from the fact that $a$ is even with respect to $x=\frac{1}{2}$. Therefore, $H(a)$ is odd with respect to $x=\frac{1}{2}$.

In particular, in the expansion of $H(aH(a)^k)$ only the terms $H(a)^{k+1},$ $H(a)^{k-1},...$ should survive.

Assuming that the bounds held up at steps $j-1,...1$ we can bound 

$$|b^{j}_n| \leq\frac{j}{j+1}\Big( \frac{n-1}{n}(j-1)^{j-n} + \sum_{l=n}^{j-2} 2(j-1)^{j-l-1} l^{l-n}\delta_{n,l,j}\Big), $$

where $$\delta_{l,n,j}=0,\,\, \text{if}\,\, j-l \,\, \text{is odd OR} \,\, l-n \,\, \text{is even,}$$
$$\delta_{l,n,j}=1 \,\, \text{otherwise}.$$

In particular, we can remove the $l=n$ term.

Now we estimate:

$$\sum_{l=n+1}^{j-2} j^{j-l-1} l^{l-n}\leq \frac{1}{2}(j-n-1)j^{j-n-2}(n+1)$$

This is because there are at most $\frac{1}{2}(j-n-1)$ terms in the sum and all the terms are smaller than $j^{j-n-2}(n+1).$

Therefore, 

$$|b_n^j|\leq \frac{j}{j+1}\Big(\frac{n-1}{n}(j-1)^{j-n}+(j-n-1)(n+1)(j-1)^{j-n-2}  \Big)\leq \frac{j}{j+1}\Big(1+\frac{(j-n-1)(n+1)}{(j-1)^2}  \Big)(j-1)^{j-n}$$

$$=\frac{j}{j+1}\Big(1+\frac{(j-n-1)(n+1)}{(j-1)^2}  \Big)(1-\frac{1}{j})^{j-n}j^{j-n}.$$

In order to conclude, we need to prove that:

$$\frac{j}{j+1}\Big(1+\frac{(j-n-1)(n+1)}{(j-1)^2}  \Big)(1-\frac{1}{j})^{j-n}\leq 1.$$

We will prove that 

$$M:=\Big(1+\frac{(j-n-1)(n+1)}{(j-1)^2}  \Big)(1-\frac{1}{j})^{j-n}\leq 1.$$

First we call $A=\frac{j}{n}.$ Notice that since $j-n\geq 1$ we have that $An-n-1\geq 0.$ In fact, we may assume that $j-n\geq 2$ since when $j=n+1$ the inequality is trivial. 

Therefore, 

$$M=\Big(1+\frac{(An-n-1)(n+1)}{(An-1)^2}  \Big)(1-\frac{1}{An})^{(A-1)n}.$$

Now we have the following well known inequality: $$(1-\frac{1}{An})^{n}\leq e^{-\frac{1}{A}}.$$

This implies that

$$M\leq \Big(1+\frac{(An-n-1)(n+1)}{(An-1)^2}  \Big)e^{-\frac{A-1}{A}}.$$

But now $$e^{-\frac{A-1}{A}}\leq 1-\frac{A-1}{A}+\frac{1}{2}\Big(\frac{A-1}{A}\Big)^2=\frac{A^2+1}{2A^2}.$$

Therefore, $$M\leq (1+\frac{(An-n-1)(n+1)}{(An-1)^2}  \Big)\frac{A^2+1}{2A^2} $$ 
$$=\frac{(An-1)^2+((A-1)n-1)(n+1)}{(An-1)^2}\cdot\frac{A^2+1}{2A^2}=\frac{(An-1)^2+(A-1)n^2+(A-1)n-n-1}{(An-1)^2}\cdot\frac{A^2+1}{2A^2}$$

Now, $$M\leq 1$$
$$\iff$$

$$\Big((An-1)^2+(An-1-n)(n+1)\Big)(A^2+1)\leq 2A^2(An-1)^2$$

$$\iff$$

$$ (An-1-n)(n+1)(A^2+1)\leq(A^2-1)(An-1)^2$$

Now, since we can assume $j-n\geq 2$ we see that $An-1\geq n+1$. 

Therefore, the above is true 

$$\iff$$

$$(An-1-n)(A^2+1)\leq (A^2-1)(An-1)$$

$$\iff$$

$$n(A^2+1)\geq 2(An-1)$$

which is true for all $A\geq 1$ and all nonnegative integers $n$. 

Thus, $M\leq 1$ and so 

$$|b_n^j|\leq j^{j-n}.$$

By induction, this bound must hold for all $j$ and $n$. 

A similar proof can be used to bound the $c_{n}^j$ because we didn't actually use the $-\frac{1}{n}b^{j-1}_{n-1}$ term in the recursion formula for $b^{j}_n$ and the $-1$ in the formula for $c_n^j$ is negligible due to the presence of the $\frac{j}{j+1}$ term. 

\end{proof}

\emph{Remark:} The expansion above is not specific to the indicator function or the Hilbert transform; indeed, any $a$ and $H$ satisfying the following properties will do:

$(1) \, \, a^2= \lambda a, \lambda\in\mathbb{R}$

$(2) \, \,H^2=-1,$

$(3) \, \, H(fg)=gH(f)+fH(g) +H(H(f)H(g)).$

It is not clear to the author whether there are non-trivial operators other than the Hilbert transform which satisfy (2)-(3). However, one might imagine such a structure is possible in certain algebraic settings.

We have the following corollary:

\begin{cor}

Let $a$ be a function of one real variable satisfying that $a^2= \lambda a, \lambda\in\mathbb{R}$ (such as the characteristic function of a bounded set). Define $$\Omega_{a}:=\{\sum_{j=0}^\infty (\alpha_{j} +\beta_{j}a)H(a)^{j}\big | (\alpha_{j})_{j},(\beta_j)_j\in \ell^1, \alpha_{0}=0 \}.$$

Then, $H(\Omega_a)= \Omega_{a}.$ 

\end{cor}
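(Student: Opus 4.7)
My plan is to establish the forward inclusion $H(\Omega_a) \subseteq \Omega_a$ first and then deduce the reverse inclusion from $H^2 = -1$. By linearity of $H$, any element $f = \sum_{j \geq 0} (\alpha_j + \beta_j a) H(a)^j \in \Omega_a$ satisfies
\[
H(f) = \sum_{j} \alpha_j H(H(a)^j) + \sum_{j} \beta_j H(a H(a)^j),
\]
so it suffices (modulo convergence) to show that the two families $\{H(a H(a)^j)\}_{j \geq 0}$ and $\{H(H(a)^j)\}_{j \geq 1}$ each lie in $\Omega_a$. The first family is handled directly by Lemma 2.1, which expresses $H(a H(a)^j)$ as a finite sum of terms of the form $(b + c\,a) H(a)^l$ with $l \leq j+1$.

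For the second family I would induct on $j$. The base case $j = 1$ gives $H(H(a)) = -a \in \Omega_a$ by property (2). For the inductive step, applying the Tricomi identity to $H(a) \cdot H(a)^{j-1}$ and using $H(H(a)) = -a$ yields
\[
H(H(a)^j) = -a H(a)^{j-1} + H(a) \cdot H(H(a)^{j-1}) - H\bigl(a \cdot H(H(a)^{j-1})\bigr).
\]
Writing $H(H(a)^{j-1}) = \sum_k (\gamma_k + \delta_k a) H(a)^k$ by the inductive hypothesis, the first term is trivially in $\Omega_a$; the second equals $\sum_k (\gamma_k + \delta_k a) H(a)^{k+1}$, which is in $\Omega_a$ after reindexing; and the third, using property (1) to collapse $a^2 = \lambda a$, becomes $-\sum_k (\gamma_k + \lambda \delta_k) H(a H(a)^k)$, to which Lemma 2.1 applies term by term to produce another element of $\Omega_a$. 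This closes the induction.

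The reverse inclusion $\Omega_a \subseteq H(\Omega_a)$ is then immediate: for $f \in \Omega_a$ the forward direction gives $-H(f) \in \Omega_a$, and by property (2) we have $H(-H(f)) = f$. The main technical obstacle will be verifying that the manipulations above preserve the $\ell^1$ condition on $(\alpha_j), (\beta_j)$. The coefficient bound $|b_l^k|, |c_l^k| \leq k^{k-l}$ from Lemma 2.1 grows rapidly for $l \ll k$, so when we rearrange the double sum produced by $H\bigl(a \cdot H(H(a)^{j-1})\bigr)$ into a single series indexed by the power of $H(a)$, membership in $\ell^1$ is not automatic. One presumably exploits the vanishing pattern $b_l^k = c_l^k = 0$ when $k - l$ is even, together with decay of $(\gamma_k), (\delta_k)$ inherited from the induction, or else works inside a smaller weighted subspace of $\Omega_a$. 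This bookkeeping is where the bulk of the work lies.
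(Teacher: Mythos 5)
The paper gives no explicit proof of this corollary --- it is presented as a consequence of Lemma 2.1 together with the remark that the whole expansion only uses $a^2 = \lambda a$, $H^2 = -1$, and the Tricomi identity. Your two-part decomposition is the natural route and almost certainly what the author had in mind: the family $\{H(aH(a)^j)\}$ is handled verbatim by Lemma 2.1, and for $\{H(H(a)^j)\}$ your Tricomi induction is correct --- the base case $H(H(a)) = -a$ comes from $H^2 = -1$, the recursion $H(H(a)^j) = -aH(a)^{j-1} + H(a)H(H(a)^{j-1}) - H\bigl(a\,H(H(a)^{j-1})\bigr)$ is exactly what the Tricomi identity with $f = H(a)$, $g = H(a)^{j-1}$ gives, and collapsing $a^2 = \lambda a$ before re-invoking Lemma 2.1 is the right move. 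The reverse inclusion from $H^2 = -1$ is likewise correct. So for \emph{finite} linear combinations your argument is complete.

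Your flagged worry about the $\ell^1$ condition is not merely a loose end: it points to a real gap in the corollary \emph{as stated}. With the bound $|b_l^k|, |c_l^k| \leq k^{k-l}$ --- which the paper's own remark describes as numerically sharp up to constants --- a sequence such as $\beta_j \sim j^{-2}$ (comfortably in $\ell^1$) gives reindexed coefficients $\sum_{j>l}\beta_j b_l^j$ whose terms do not even tend to zero, so absolute convergence fails and the resulting series cannot be placed back in $\Omega_a$ by any bookkeeping of the kind you suggest. The statement is safe for sequences with factorial-type decay, e.g. $|\gamma_j| \lesssim t^j/(j!)^2$, which is precisely the regime used in the ansatz (2.2) and in Lemma 2.3, and that is all the proof of Theorem 1.1 actually needs. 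In other words, the missing step you identified is not something to be filled in for arbitrary $\ell^1$ data; a correct version of the corollary must replace $\ell^1$ by a sharper decay hypothesis, or be read as a statement about finite (or formal) series.
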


A similar statement can be made if $a$ can be decomposed into a sum: $a=\sum_i \lambda_i a_i$ with $a_i^2=a_i$ by the linearity of $H.$

\begin{lemm}

Let $\big (\gamma_{k}(t)\big)_{k=-1}^\infty$ be a time dependent sequence of real numbers which solve the following system of ODE's:

$$\frac{d}{dt} \gamma_{k}(t)= \frac{1}{k}\gamma_{k-1}(t)+ \sum_{j\geq k+1} \gamma_{j}(t)d_{j,k}$$
$$\gamma_{-1}(t)\equiv 0$$
$$\gamma_{0}(t=0)=1,$$
$$\gamma_{k}(t=0)=0, k\geq 1.$$

Assume that $d_{j,k}$ satisfy the bound:

$$|d_{j,k}|\leq C j^{j-k}$$ for some fixed constant $C.$

Then, 

$$\gamma_{k}(t)\geq\frac{c}{(k!)^2}t^{k}$$ for all $t<\delta$ for some fixed $\delta.$

\end{lemm}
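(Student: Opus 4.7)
The plan is a two-sided bootstrap.  First I would reformulate the ODE as the integral equation
\begin{equation*}
\gamma_k(t) = \delta_{k,0} + \frac{1}{k}\int_0^t \gamma_{k-1}(s)\,ds + \int_0^t \sum_{j\geq k+1}\gamma_j(s)\,d_{j,k}\,ds,
\end{equation*}
valid for $k\geq 0$ with the convention that the $\frac{1}{k}$-term drops when $k=0$.  Were the tail coupling absent, the recursion would produce exactly $\gamma_k(t)=t^k/(k!)^2$ starting from $\gamma_0\equiv 1$, so the whole task is to show that the tail is a small perturbation uniformly in $k$ on a short time interval.

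The central combinatorial input is the estimate
\begin{equation*}
\frac{(k+m)^m}{\bigl((k+1)(k+2)\cdots(k+m)\bigr)^2} \;\leq\; \left(\frac{e}{k+1}\right)^m,
\end{equation*}
which follows by combining $(k+1)\cdots(k+m)\geq (k+1)^m$ with the chain $(k+m)^m/\bigl((k+1)\cdots(k+m)\bigr) \leq m^m/m!\leq e^m$ (the first step using $(k+m)/(k+i)\leq m/i$ for $1\leq i\leq m$).  Combined with $|d_{j,k}|\leq Cj^{j-k}$ and the substitution $j=k+m$, this gives
\begin{equation*}
(k!)^2\sum_{m\geq 1}\frac{(k+m)^m\, t^m}{((k+m)!)^2} \;\leq\; \sum_{m\geq 1}\Bigl(\frac{et}{k+1}\Bigr)^m \;\leq\; \frac{C_0\,t}{k+1}
\end{equation*}
uniformly in $k$ for $t$ small.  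Using this I would prove the a priori upper bound $|\gamma_k(t)|\leq M t^k/(k!)^2$ by a super-solution argument: the function $w_k(t) = e^{\lambda t}\,t^k/(k!)^2$ satisfies $w_k' = \lambda w_k + w_{k-1}/k$, and the displayed estimate makes $C\sum_m (k+m)^m\,w_{k+m}\leq CC_0 t/(k+1)\cdot w_k \leq \lambda w_k$ for $\lambda$ chosen once and for all sufficiently large, uniformly in $k$.  Running comparison on the system truncated at level $N$ gives $|\gamma_k^{(N)}(t)|\leq w_k(t)$, and the bound transfers to $\gamma_k$ upon passing $N\to\infty$.

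With the upper bound in hand, the lower bound $\gamma_k(t)\geq c\,t^k/(k!)^2$ follows by strong induction on $k$.  The base case $k=0$ follows from $\gamma_0(0)=1$ and $|\gamma_0(t)-1|=O(t^2)$ (obtained by applying the upper bound to the coupling term in $\gamma_0'$).  For the inductive step, the lower bound on $\gamma_{k-1}$ produces the main contribution $c_{k-1}\,t^k/(k!)^2$, while integrating the tail and invoking the combinatorial estimate yields
\begin{equation*}
\int_0^t \Bigl|\sum_{j>k}\gamma_j(s)\,d_{j,k}\Bigr|\,ds \;\leq\; C'\,\frac{t^{k+2}}{(k+1)^2\,(k!)^2},
\end{equation*}
the crucial point being the $(k+1)^{-2}$ factor.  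Hence $c_k \geq c_{k-1} - C'\,t^2/(k+1)^2$, and since $\sum_{k} 1/(k+1)^2 < \infty$, iterating gives $c_k \geq c_0 - C''\,t^2 \geq c_0/2$ uniformly in $k$ for $t\leq\delta$ small enough, closing the induction.  The main technical obstacle is isolating the sharp combinatorial bound above: the weaker estimate $(k!)^2/((k+m)!)^2\leq 1/((k+1)^m m!)$ only gives $O(1/(k+1))$ for the perturbation, which would merely yield $c_k\gtrsim k^{-O(t^2)}$, too weak to be uniform; the refined $(k+1)^{-2m}$-type scaling is what makes the telescoping argument close.
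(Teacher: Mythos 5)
Your proof is correct, and while it shares the same spirit as the paper's argument (a bootstrap / comparison argument after truncation), the execution is meaningfully different and, in one respect, more careful.

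The paper runs a joint two-sided bootstrap on $\gamma_k$ directly from the differential form, bounding the tail by
$\sum_{j\geq k+1}\frac{\gamma_j}{\gamma_{k-1}}|d_{j,k}|\leq \frac{1}{4k^2}$ (using the crude inequality $\sum_{j\geq k+1}\frac{j^j}{(j!)^2}\leq\frac{k^k}{(k!)^2}$, which incidentally fails for small $k$, e.g.\ $k=1$), and concludes $\frac{d}{dt}\gamma_k\geq\frac{3}{4k}\gamma_{k-1}$. But the multiplicative loss of $3/4$ per level is \emph{not} summable: iterating it gives a prefactor $(3/4)^k\to 0$, so the stated improvement of the lower-bound constant from $1/100$ to $1/2$ does not actually follow from that estimate alone. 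You instead first decouple the upper bound from the lower bound, establishing $|\gamma_k|\leq M t^k/(k!)^2$ via the explicit super-solution $w_k=e^{\lambda t}t^k/(k!)^2$ on the truncated system, and then run a clean strong induction for the lower bound using the integral form. The key technical gain is the sharper combinatorial estimate
$\frac{(k+m)^m}{\bigl((k+1)\cdots(k+m)\bigr)^2}\leq\bigl(\tfrac{e}{k+1}\bigr)^m$,
which—after the time integration produces the further factor $\frac{1}{k+2}$—gives the tail a relative size of $O\bigl(\delta^2/((k+1)(k+2))\bigr)$. Since $\sum_k \frac{1}{(k+1)(k+2)}<\infty$, the constants $c_k$ form a summably-perturbed sequence and remain uniformly bounded below, closing the induction with a uniform $c$. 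This extra factor of $\frac{1}{k+1}$ relative to the paper's $\frac{1}{k^2}$ is exactly what turns a divergent multiplicative loss into a convergent additive one, so your write-up both proves the lemma and pinpoints the quantitative mechanism that makes it true.

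Two small points worth making explicit in a final write-up: (i) the super-solution comparison on the truncated system should be run on $|\gamma_k^{(N)}|$ via the integral inequality (rather than the ODE), so that no sign information on $\gamma_k$ is needed at that stage; and (ii) the restriction $\delta < \frac{1}{2e}$ ensures the geometric series $\sum_m(\tfrac{e\delta}{k+1})^m$ converges uniformly in $k\geq 0$, which you use implicitly.
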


\begin{proof}

First we will truncate the system and derive a good a priori estimate on the solution of the truncated system.  First we will present the following a priori estimate:

\emph{Claim: Suppose that the coefficients $\gamma_{k}$ satisfy: }

$$\frac{t^k}{(k!)^2}\geq \gamma_{k}(t)\geq \frac{t^k}{100 (k!)^2}, \forall t\in [0,\delta],$$

for $\delta\leq\frac{1}{20\sqrt{C}}.$

Then, $$\frac{t^k}{(k!)^2}\geq\gamma_{k}(t)\geq \frac{t^k}{2(k!)^2} \forall t\in [0,\delta]. $$

\emph{Proof of the claim:}

$$\frac{d}{dt} \gamma_{k}(t)= \frac{1}{k}\gamma_{k-1}(t)+ \sum_{j\geq k+1} \gamma_{j}(t)d_{j,k}$$
$$= \gamma_{k-1}(t)\Big (\frac{1}{k}+ \sum_{j\geq k+1} \frac{1}{\gamma_{k-1}}\gamma_{j}(t)d_{j,k}\Big ) $$

By the assumption of the claim, $$\frac{1}{\gamma_{k-1}(t)}\gamma_{j}(t)\leq \frac{100(k-1)!^2}{j!^2}.$$

Now, by assumption $$|d_{j,k}|\leq C j^{j-k}.$$

Therefore,  $$\frac{1}{\gamma_{k-1}(t)}\gamma_{j}(t)d_{j,k}\leq \frac{100Ct^2(k-1)!^2}{j!^2}j^{j-k}$$

Now, for $j\geq k+1$ we can estimate:

$$\frac{1}{\gamma_{k-1}(t)}\gamma_{j}(t)d_{j,k}\leq \frac{100Ct^2(k-1)!^2}{j!^2}j^{j-k}\leq \frac{100Ct^2 (k-1)!^2}{(k+1)^k}\frac{j^j}{j!^2}$$

Now, $$\sum_{j\geq k+1}\frac{j^j}{(j!)^2}\leq \frac{k^{k}}{k!^2}.$$ This is due to the following fact:

$$|a_{k+1}|\leq \frac{1}{2}|a_k| \, \, \forall k \implies \sum_{n=k+1}^\infty |a_{n}|\leq |a_{k}|$$

Thus, 

 $$\sum_{j\geq k+1}\frac{1}{\gamma_{k-1}(t)}\gamma_{j}(t)d_{j,k}\leq \frac{100Ct^2(k-1)!^2}{(k+1)^k}\frac{k^k}{(k!)^2}\leq \frac{100Ct^2}{k^2}\leq\frac{1}{4k^2}$$ for $t\leq\delta\leq \frac{1}{20\sqrt{C}}.$

In particular, $$\frac{d}{dt}\gamma_{k}(t)\geq\frac{3\gamma_{k-1}(t)}{4k}$$

This completes the proof of the claim. 

Now, with the claim at hand we simply need to truncate the sequence and pass to a limit. We leave the details to the reader. 

\end{proof}

\begin{lemm}
Consider the function $$G(x,t)=\sum_{k=0}^\infty \frac{t^k \Big(Log\big|\frac{x}{x-1}\big|\Big)^k}{(k!)^2}.$$

Then, $$\Big(\int_{\frac{1}{2}}^1 G(x,t)^p dx\Big )^{1/p}\geq c e^{ctp}$$ for some fixed constant $c.$
\end{lemm}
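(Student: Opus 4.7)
The plan is to reduce the integral on $[1/2,1)$ to a one-variable Laplace-type integral on the half-line by straightening out the logarithmic singularity, then read off the claim from the location of the critical point of the resulting phase.

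The first step I would take is the substitution $u = L(x) := \log|x/(x-1)|$, which maps $(1/2,1)$ bijectively onto $(0,\infty)$ with inverse $x = e^u/(1+e^u)$ and Jacobian $dx = e^u(1+e^u)^{-2}\,du$. In these coordinates,
$$G(x,t) = \sum_{k=0}^\infty \frac{(tu)^k}{(k!)^2} = I_0\!\bigl(2\sqrt{tu}\,\bigr),$$
a (rescaled) modified Bessel function of the first kind. Rather than appealing to the full $I_0$-asymptotics, I would get the required pointwise lower bound by retaining only the dominant term at $k^*(u) = \lfloor\sqrt{tu}\rfloor$ in the series and invoking Stirling; this gives
$$G(x,t) \;\geq\; c\,(tu)^{-1/4}\, e^{2\sqrt{tu}} \qquad \text{whenever } tu \geq 1.$$

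The second step is Laplace's method. Since $e^u(1+e^u)^{-2} \geq \tfrac14 e^{-u}$ for $u \geq 1$, raising the pointwise bound to the $p$th power and integrating yields
$$\int_{1/2}^1 G(x,t)^p\, dx \;\gtrsim\; \int_1^\infty (tu)^{-p/4}\, e^{\varphi(u)}\, du, \qquad \varphi(u) := 2p\sqrt{tu} - u.$$
The phase $\varphi$ has a unique interior maximum at $u^* = tp^2$ with $\varphi(u^*) = tp^2$ and $\varphi''(u^*) = -(2tp^2)^{-1}$, giving a characteristic width $\sim p\sqrt{t}$. Restricting integration to $[u^*, u^* + p\sqrt{t}]$, on which $\varphi(u) \geq tp^2 - O(1)$ and $(tu)^{-p/4}$ is comparable to $(tp)^{-p/2}$, produces
$$\int_{1/2}^1 G(x,t)^p\, dx \;\geq\; c\,p\sqrt{t}\,(tp)^{-p/2}\, e^{tp^2}.$$

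Taking $p$th roots gives $(cp\sqrt{t})^{1/p}\,(tp)^{-1/2}\, e^{tp}$. The two polynomial prefactors $(tp)^{-1/2}$ and $(cp\sqrt{t})^{1/p}$ are both subexponential in $tp$, so for $tp$ sufficiently large they can be absorbed into a factor $e^{-\varepsilon tp}$ with arbitrarily small $\varepsilon$, yielding the claimed $e^{ctp}$ lower bound after shrinking $c$. The complementary regime of bounded $tp$ is handled trivially from the $k=0$ term, $G \geq 1$, which gives $(\int_{1/2}^1 G^p)^{1/p} \geq (1/2)^{1/p} \geq 1/2$ for $p \geq 1$, matching $c e^{ctp}$ once $c$ is chosen small enough.

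The step I expect to be most delicate is the bookkeeping inside the Laplace estimate: one must verify carefully that all the polynomial-in-$(pt)$ corrections coming from Stirling and from the width of the stationary point are indeed uniformly absorbable into $e^{-\varepsilon tp}$ for some universal $\varepsilon > 0$, across the whole range of $(p,t)$ with $tp$ large. The structural input, namely that the series defining $G$ is exactly $I_0(2\sqrt{tu})$ after the change of variable, is what makes the critical point computation clean and gives the exponent $tp^2$ that matches the claim upon taking the $p$th root.
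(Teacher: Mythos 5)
Your proof is correct, but it takes a genuinely different route from the paper's. The paper avoids asymptotics entirely: it uses the elementary inequality $(k!)^2 \geq 4^{-k}(2k)!$ to bound $G$ below by $\cosh\bigl(\tfrac{1}{2}\sqrt{tu}\bigr) \geq \tfrac{1}{2}e^{\sqrt{tu}/2}$ where $u = \log|x/(x-1)|$; after the same substitution $s = -\log|x-1|$ it then \emph{completes the square} exactly, $2p\sqrt{s}\cdot\tfrac{\sqrt t}{2} - s = \tfrac{tp^2}{4} - (\sqrt s - \tfrac{\sqrt t}{2}p)^2$, and finishes with the uniform bound $\inf_{a\geq 0}\int_0^\infty e^{-(\sqrt s - a)^2}\,ds \geq \tfrac12$. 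This produces $e^{tp^2/4}$ with a clean absolute constant in front, no Stirling, no polynomial prefactors to absorb, and no case split on the size of $tp$. By contrast, you identify $G = I_0(2\sqrt{tu})$ and run a Laplace/stationary-phase argument against the weight $e^{-u}$, which does get the sharper exponent $e^{tp^2}$ but at the cost of polynomial-in-$(tp)$ corrections (from Stirling and from the width of the critical point) that you then have to show are absorbable into $e^{-\varepsilon tp}$, together with a separate small-$tp$ argument. The two approaches really are different: yours is more generic and reveals the Bessel structure and the sharp constant; the paper's $\cosh$ trick is tailored to the $1/(k!)^2$ coefficients and trades a factor of $4$ in the exponent for a completely elementary, uniform estimate. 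One minor slip in your write-up: keeping literally only the single dominant term $k^* \approx \sqrt{tu}$ gives, via Stirling, the weaker prefactor $(tu)^{-1/2}$ rather than the $(tu)^{-1/4}$ you wrote (the latter is the true Bessel asymptotic and requires summing over $\sim (tu)^{1/4}$ near-maximal terms); this changes nothing downstream, since either power is subexponential and gets absorbed the same way.
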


\begin{proof}
Note that all of the terms in the series expansion of $G$ are positive on $[\frac{1}{2},1].$ Note further that 

$$(k!)^2\geq 4^{-k}(2k!).$$

Also recall the formula:

$$\sum_{k=0}^\infty \frac{x^k}{(2k!)}=\frac{1}{2}(e^{\sqrt{x}}+e^{-\sqrt{x}})=cosh(\sqrt{x}).$$
Thus, $$G(x,t)\geq cosh\Big (\sqrt{\frac{1}{4}t Log\big|\frac{x}{x-1}\big|}\Big)\geq \frac{1}{4} \exp\Big (\sqrt{\frac{1}{4}t Log\big|\frac{x}{x-1}\big|}\Big) $$ on $[\frac{1}{2},1].$ 

But then $$G(t,x)\geq\frac{1}{16}\exp \Big (\sqrt{-\frac{1}{4}t Log\big|{x-1}\big|}\Big) $$ since $x\in[\frac{1}{2},1].$ 

Now we compute:

$$\int_{\frac{1}{2}}^1 G(t,x)^p dx \geq \frac{1}{16}\int_{\frac{1}{2}}^1 \exp \Big (p\sqrt{-\frac{1}{4}t Log\big|{x-1}\big|}\Big)dx $$

Now change variables with $s=-Log|x-1|$ and we see:

$$\int_{\frac{1}{4}}^1 G(t,x)^p dx \geq\frac{1}{16}\int_{Log(2)}^\infty e^{-s}e^{p\sqrt{\frac{1}{4}t s}}ds$$

$$\geq e^{\frac{1}{4}tp^2} \frac{1}{16}\int_{Log(2)}^\infty e^{-s}e^{p\sqrt{\frac{1}{4}t s}}e^{-\frac{1}{4}tp^2}ds= e^{\frac{1}{4}tp^2} \frac{1}{16}\int_{Log(2)}^\infty e^{-(\sqrt{s}-\frac{\sqrt{t}}{2}p)^2} ds.$$

Now, we have that:

$$\inf_{a\geq 0}\int_{0}^{\infty} e^{-(\sqrt{s}-a)^2}ds\geq \frac{1}{2}.$$

Indeed,   $$\int_{0}^{\infty} e^{-(\sqrt{s}-a)^2}ds=2\int_{0}^{\infty} x e^{-(x-a)^2}dx=\int_{-a}^\infty(x+a)e^{-x^2}dx\geq\int_{0}^\infty(x+a)e^{-x^2}dx\geq \frac{1}{2}.$$ 

This completes the proof of the lemma.

\end{proof}

Now we are in a position to prove Theorem 1.1. 

\begin{proof} (Of Theorem 1.1)

We search for a solution of (1.1) of the following type:

\begin{equation} f(t,x)=\sum_{j} (\alpha_{j}(t)+\beta_{j}(t)a(x))H(a)^{j}(x,t). \end{equation}

Notice that since multiplication by $a$ and application of $H$ both keep $\Omega_a$ invariant, we see that $f$ must be of this form if the initial data is taken to be of this form.

We take $f_{0}=a$ so that only $\beta_{0}$ is non-zero initially. Now, we plug in our formula for $f$ and study the ODE's for the coefficients $\alpha_j$ and $\beta_j$.
Notice, first, that $$f(t,x)=\sum_{j} (\alpha_{j}(t)+\beta_{j}(t))H(a)^{j}(x,t), \forall x\in(0,1).$$

Using Lemma 2.4 it will suffice to prove that $$\alpha_{j}(t)+\beta_{j}(t)\geq\frac{ct^j}{(j!)^2}.$$

Now plug (2.2) into (1.1).

We see that $$\sum_j (\frac{d}{dt}\alpha_{j}(t)+\frac{d}{dt}\beta_j(t)a)H(a)^j=\sum_j (\alpha_{j}(t)+\beta_j(t))H(aH(a)^j).$$

Now we use Lemma 2.1 to simplify the RHS:

$$\sum_j (\alpha_{j}(t)+\beta_j(t))H(aH(a)^j)=\sum_j (\alpha_{j}(t)+\beta_j(t))\big(\frac{1}{j+1}(H(a)^{j+1})+\sum_{l=0}^{j-1}(b_{l}^{j}+c_{l}^j a)H(a)^l\big)$$ 

Now we match coefficients

$$\frac{d}{dt}\alpha_{j}(t)=\frac{1}{j-1}(\alpha_{j-1}+\beta_{j-1})+\sum_{l=j+1}^\infty (\alpha_{l}(t)+\beta_{l}(t))b^{l}_j$$
$$\frac{d}{dt}\beta{j}(t)=\sum_{l=j+1}^\infty (\alpha_{l}(t)+\beta_{l}(t))b^{l}_j$$

Now call $\alpha_j+\beta_j=\gamma_j.$

Then, 

$$\frac{d}{dt}\gamma_j(t)=\frac{1}{j-1}\gamma_{j-1}(t)+\sum_{l=j+1}^\infty\gamma_{j}(t)b_{j}^l.$$

Lemma 2.1 gives us the necessary bound on $b_j^l$ to apply Lemma 2.3 and we are done.

\end{proof}

\section{Application to a transport equation with divergence-free velocity field}

Consider the following equation in two dimensions:

$$\partial_{t} f+ \big(0,\chi_{[0,1]}\big)\cdot\nabla_{x,y} f=-H_{x}f,$$
 here $f$ is a function of $x$ and $y$, and $H_x$ is the Hilbert transform in the $x$ variable.

Then, we get:

$$\partial_{t} f+ \chi_{[0,1]}(x)\partial_y f=-H_{x}f.$$

Now take $f(x,y)=e^y g(x)$ as an ansatz. 

Then we get that $g$ must satisfy the following equation:

$$\partial_t g + \chi_{[0,1]}g=-H(g).$$

Thus, multiplying our equation by the integration factor $e^{t\chi_{[0,1]}(x)}$ we see:

$$\partial_{t} \big( e^{t\chi_{[0,1]}(x)} g(x)\big)=-e^{t\chi_{[0,1]}(x)}H(g)(x)$$

Now call $M(t,x)=e^{t\chi_{[0,1]}(x)} g(t,x).$

So, $$\partial_t M= -e^{t\chi_{[0,1]}(x)}H(e^{-t\chi_{[0,1]}(x)}M).$$

Now note that $$e^{t\chi_{[0,1]}(x)}=e^t \chi_{0,1}(x)+ (1-\chi_{[0,1]}(x))=(e^t-1)\chi_{[0,1]}(x)+1.$$

Then we see that $M$ satisfies the following equation:

$$\partial_{t}M=-\Big((e^t-1)\chi_{[0,1]}(x)+1\Big)H\Big(\big((e^{-t}-1)\chi_{[0,1]}(x)+1\big)M\Big)$$

Thus, $$\partial_{t}M=-H(M) -(e^t-1)\chi_{[0,1]}H(M)+ (1-e^{-t})H(\chi_{[0,1]}M)+ (e^t+e^{-t}-2)\chi_{[0,1]}H(\chi_{[0,1]}M).$$ Given the results of the previous section, it makes sense to search for a solution of the form:

$$M(t,x)=\sum_{k} (\alpha_{k}(t)+\beta_{k}(t)\chi_{[0,1]})Log\Big (\frac{x}{x-1} \Big)^k.$$

From the calculations in the previous section (Lemma 2.1), we see:

$$\frac{d}{dt}\alpha_k(t)=\frac{1}{\pi(k-1)}\beta_{k-1}-(1-e^{-t}) \frac{1}{\pi(k-1)}(\alpha_{k-1}+\beta_{k-1})+Error$$
$$\frac{d}{dt}\beta_k (t)=(e^{t}-1) \frac{1}{\pi(k-1)}\beta_{k-1}-(e^{t}+e^{-t}-2)\frac{1}{\pi(k-1)}(\alpha_{k-1}+\beta_{k-1})+Error$$

To clarify what will happen in this case we consider $t$ to be very small. Then we see (by replacing $e^t$ by $t+1$ since $t$ is small):

$$\frac{d}{dt} \alpha_{k}(t)=\frac{\beta_{k-1}}{\pi(k-1)}-\frac{t}{\pi(k-1)} (\alpha_{k-1}+\beta_{k-1})+Error$$
$$\frac{d}{dt} \beta_{k}(t)=\frac{t\beta_{k-1}}{\pi(k-1)}+ Error$$

Now, as before, we are going to take $\beta_0$ to be 1 initially and everything else to be 0 initially. 

Now we are going to solve the linear system:

$$\frac{d}{dt} \alpha_{k}(t)=\frac{\beta_{k-1}}{\pi(k-1)}-\frac{t}{\pi(k-1)} (\alpha_{k-1}+\beta_{k-1})$$
$$\frac{d}{dt} \beta_{k}(t)=\frac{t\beta_{k-1}}{\pi(k-1)}$$

We see that $$\beta_{k}(t)= \frac{(\frac{t^2}{2\pi})^{k}}{(k!)^2}.$$

Thinking that $\alpha_{k}$ and $\beta_{k}$ are positive we see that

$$\alpha_{k}(t)\leq \frac{t^{2k-1}}{(2k-1)(2\pi)^{k-1}(k-1)!^2}.$$

Now we want to put the Error back in and show that $$\beta_{k}(t)\geq \frac{1}{2}\frac{(\frac{t^2}{2\pi})^k}{(k!)^2}.$$

We get the following proposition:

\begin{prop}
Consider the following evolution equation:

$$\partial_{t}M=H(M) +(e^t-1)\chi_{[0,1]}H(M)- (1-e^{-t})H(\chi_{[0,1]}M)- (e^t+e^{-t}-2)\chi_{[0,1]}H(\chi_{[0,1]}M),$$
$$M(t=0,x)=\chi_{[0,1]}(x).$$

Then, for $t<c$, for some universal constant $c$, 

$$|M(t)|_{L^p}\geq e^{ct^2p}.$$

\end{prop}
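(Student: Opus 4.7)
The plan is to recycle the cascading-solution strategy from the proof of Theorem 1.1, with one crucial twist: each coupling to the higher modes is multiplied by one of the factors $e^t-1$, $1-e^{-t}$ or $e^t+e^{-t}-2$, which at small time behave like $t$, $t$, and $t^2$ respectively. Because of this, the cascade runs at ``half speed'' in $t$, producing growth of the form $(t^2)^k/(k!)^2$ rather than $t^k/(k!)^2$, and the exponent in the final $L^p$ lower bound is $ct^2p$ rather than $ctp$. Since $\chi_{[0,1]}$ and $H$ both preserve the space $\Omega_a$ of Corollary 2.2, I would begin by justifying the ansatz
\[
M(t,x)=\sum_{k=0}^\infty \bigl(\alpha_k(t)+\beta_k(t)\chi_{[0,1]}(x)\bigr)\,H(\chi_{[0,1]})^k(x),
\]
with $\beta_0(0)=1$ and all other initial coefficients zero.

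Next I would plug the ansatz into the evolution equation and apply Lemma 2.1 to each of the four terms on the right, matching coefficients against the basis $\{H(\chi_{[0,1]})^k,\,\chi_{[0,1]}H(\chi_{[0,1]})^k\}$. This yields an infinite ODE system whose leading part (after replacing $e^{\pm t}$ by its first-order Taylor polynomial and truncating the Lemma 2.1 expansion at the top term $\tfrac{1}{k+1}H(a)^{k+1}$) is exactly the displayed system
\[
\tfrac{d}{dt}\alpha_k=\tfrac{\beta_{k-1}}{\pi(k-1)}-\tfrac{t}{\pi(k-1)}(\alpha_{k-1}+\beta_{k-1}),\qquad
\tfrac{d}{dt}\beta_k=\tfrac{t\beta_{k-1}}{\pi(k-1)},
\]
with explicit solution $\beta_k(t)=(t^2/(2\pi))^k/(k!)^2$ and $\alpha_k(t)\lesssim t^{2k-1}/((k-1)!)^2$. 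The remaining terms are collected into an Error whose two sources are (i) the higher-order Taylor remainders $e^t-1-t$, $1-e^{-t}-t$, $e^t+e^{-t}-2-t^2$, each of which is $O(t^2)$ or smaller, and (ii) the subleading coefficients $b_l^j,c_l^j$ from Lemma 2.1, controlled by $|b_l^j|,|c_l^j|\le j^{j-l}$.

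The main technical step, and what I expect to be the principal obstacle, is the bootstrap estimate
\[
\tfrac{1}{2}\,\frac{(t^2/(2\pi))^k}{(k!)^2}\le\beta_k(t)\le\frac{(t^2/(2\pi))^k}{(k!)^2},\qquad t\in[0,\delta],
\]
which is the exact analogue of the \emph{Claim} inside the proof of Lemma 2.3, but at the rescaled time variable $\tau=t^2/(2\pi)$. I would truncate the system at level $N$, assume the bracket bound on $[0,\delta]$, and then estimate $\frac{1}{\beta_{k-1}}\beta_j |\text{coupling}|$ for $j\ge k+1$. The geometric-series bound $\sum_{j\ge k+1}j^j/(j!)^2\le k^k/(k!)^2$ from Lemma 2.3 applies verbatim, and the extra factor of $t$ (in fact $t^2$) carried by every coupling term ensures that the error contribution is $\le \tfrac{1}{4(k-1)}\beta_{k-1}$ provided $\delta$ is taken sufficiently small, universal. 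The $\alpha_k$ are slaved to the $\beta_k$ through a linear driving term and can be bounded from above by the same method, showing $\alpha_k$ is of strictly lower order than $\beta_k$ and therefore does not spoil the lower bound. The difficulty here is bookkeeping: one must verify that the $t^2$ prefactor in front of every coupling to higher modes is what makes the rescaled cascade close, rather than accidentally getting only a $t$, which would reproduce Theorem 1.1 at the wrong exponent.

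Passing to the limit $N\to\infty$, on the interval $[\tfrac{1}{2},1]$ every $H(\chi_{[0,1]})^k=\pi^{-k}\log|x/(x-1)|^k$ is nonnegative, so the ansatz gives the pointwise lower bound
\[
M(t,x)\ge \tfrac{1}{2}\sum_{k=0}^\infty \frac{(t^2/(2\pi^2))^k}{(k!)^2}\Big(\log\bigl|\tfrac{x}{x-1}\bigr|\Big)^k=\tfrac{1}{2}\,G\!\left(x,\tfrac{t^2}{2\pi^2}\right),
\]
in the notation of Lemma 2.5. Applying Lemma 2.5 with $t$ replaced by $t^2/(2\pi^2)$ immediately yields
\[
\|M(t)\|_{L^p[\tfrac{1}{2},1]}\ge c\,e^{c t^2 p},
\]
which proves the proposition for $t$ below the universal threshold $\delta$ produced by the bootstrap.
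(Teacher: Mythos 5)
Your proposal follows the paper's own proof of Proposition 3.1 essentially step for step: the same $\Omega_a$ ansatz, the same application of Lemma 2.1 to derive the coupled ODE system for $\alpha_k,\beta_k$, the same Taylor truncation producing the explicit leading-order solution $\beta_k=(t^2/(2\pi))^k/(k!)^2$ with error terms controlled by $|b_l^j|,|c_l^j|\le j^{j-l}$, the same bootstrap estimate (the analogue of the Claim in Lemma 2.3, which is the content of the paper's Lemma 3.3), and the same final reduction to the $G$-function lower bound (Lemma 2.4, which you cite as ``Lemma 2.5'' but is clearly the same statement). The only deviations are cosmetic: your bootstrap constants differ from the paper's $100/50$ scheme, and you spell out more explicitly the $\pi^{-k}$ normalization in passing from $H(\chi_{[0,1]})^k$ to $\log|x/(x-1)|^k$, but the argument is substantively identical.
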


\begin{cor}
Consider the evolution equation$$\partial_{t} f+ \big(0,\chi_{[0,1]}(x)\big)\cdot\nabla_{x,y} f=H_{x}f$$ with $$f(t=0,x,y)=e^{y}\chi_{[0,1]}(x).$$

Then, $$f(t,x,y)=e^y M(t,x)$$ with $$|M(t)|_{L^p}\geq e^{ct^2 p}.$$

\end{cor}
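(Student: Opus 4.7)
The plan is to reduce the two-dimensional problem to the one-dimensional setting of Proposition 3.1 via a separation-of-variables ansatz followed by the integrating-factor trick already used in Section~3.

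First I would substitute the ansatz $f(t,x,y) = e^y g(t,x)$ into the equation. Since $\partial_y(e^y g) = e^y g$ and $H_x$ acts only in the $x$ variable, the factor $e^y$ can be cancelled and the PDE collapses to the one-dimensional equation $\partial_t g + \chi_{[0,1]}(x)\,g = H(g)$ with initial data $g(0,x) = \chi_{[0,1]}(x)$. In particular this ansatz class is preserved by the flow, so the existence of such a separated solution is automatic provided we solve the scalar equation for $g$.

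Next I would introduce the integrating factor $e^{t\chi_{[0,1]}(x)}$ and define $\tilde M(t,x) := e^{t\chi_{[0,1]}(x)}\, g(t,x)$. Using the identity $e^{\pm t\chi_{[0,1]}(x)} = 1 + (e^{\pm t}-1)\chi_{[0,1]}(x)$ and expanding, a direct computation (already carried out in Section~3) shows that $\tilde M$ satisfies exactly the evolution equation of Proposition~3.1 with $\tilde M(0,x) = \chi_{[0,1]}(x)$. Proposition~3.1 then supplies the lower bound $|\tilde M(t)|_{L^p} \geq e^{ct^2 p}$ for $t < c$.

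Finally I would transfer this bound back to $f$. Since $g = e^{-t\chi_{[0,1]}(x)}\tilde M$ and the factor $e^{-t\chi_{[0,1]}(x)}$ is uniformly pinched between $e^{-t}$ and $1$, one gets $|g(t)|_{L^p} \geq e^{-t}|\tilde M(t)|_{L^p} \geq e^{-t}e^{ct^2 p}$, which for small $t$ is still of the form $e^{c't^2 p}$. Setting $M(t,x) := g(t,x)$ yields $f(t,x,y) = e^y M(t,x)$ with the advertised lower bound. There is no real obstacle here once Proposition~3.1 is in hand; the only bookkeeping point is the mild abuse of notation between the $M$ of the corollary (which is $g$) and the $M$ of Proposition~3.1 (which is $e^{t\chi_{[0,1]}(x)}g$), related by a uniformly bounded multiplicative factor that affects the estimate only through the universal constant $c$.
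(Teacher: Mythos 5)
Your proposal mirrors the paper's own route exactly: separation of variables $f=e^yg$, then the integrating-factor change of unknown $\tilde M = e^{t\chi_{[0,1]}}g$ reducing to the one-dimensional equation of Proposition 3.1, then transfer of the $L^p$ lower bound back to $g$. The paper leaves this last step implicit (and indeed uses the letter $M$ ambiguously for both $g$ and $e^{t\chi_{[0,1]}}g$); your observation that the factor $e^{-t\chi_{[0,1]}}$ is pinched between $e^{-t}$ and $1$ is the right way to close that gap, the resulting loss being harmless for the large-$p$ regime in which the estimate has content.
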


The proof of Proposition 3.1 is similar to the Proof of Theorem 1.1 except that we need the following variant of Lemma 2.3:

\begin{lemm}
Consider the following infinite system of ODE's:

$$\frac{d}{dt}\alpha_k(t)=\frac{1}{\pi(k-1)}\beta_{k-1}-(1-e^{-t}) \frac{1}{\pi(k-1)}(\alpha_{k-1}+\beta_{k-1})+\sum_{j\geq k+1}\alpha_{j}a^1_{jk}+\beta_{j}b^1_{jk}$$
$$\frac{d}{dt}\beta_k (t)=(e^{t}-1) \frac{1}{\pi(k-1)}\beta_{k-1}-(e^{t}+e^{-t}-2)\frac{1}{\pi(k-1)}(\alpha_{k-1}+\beta_{k-1})+\sum_{j\geq k+1}\alpha_{j}a^2_{jk}+\beta_{j}b^2_{jk}$$

with $$\alpha_{0}\equiv 0$$ and $$\frac{d}{dt}\beta_0=\sum_{j\geq 1}\alpha_{j}a_{0j}^2+\beta_{j}b_{0j}^2$$ and $|a_{jk}|,|b_{jk}|\leq 2j^{j-k}.$

Then, $$\beta_{k}\geq \frac{1}{50}\frac{t^{2k-1}}{(2k-1)(2\pi)^{k-1}(k-1)!^2}$$

\end{lemm}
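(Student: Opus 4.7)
The plan is to adapt the truncation and bootstrap scheme of Lemma 2.3 to the coupled pair $(\alpha_k,\beta_k)$. First, I would replace the infinite tails $\sum_{j\geq k+1}$ by $\sum_{k+1\leq j\leq N}$ and prove the claimed lower bound on $\beta_k$ uniformly in $N$, so that I can pass to the limit $N\to\infty$ by a standard compactness argument at the end. The scheme is: pick $\delta>0$ small, assume as a bootstrap that on $[0,\delta]$ the coefficients stay within a controlled two-sided envelope around the unperturbed solutions, deduce strictly better constants from the ODE, and thereby close the bootstrap.

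Second, I would verify the unperturbed behavior. Dropping the two tail sums one obtains a linear triangular system, and an induction gives exactly
\[
\beta_k(t)=\frac{(t^2/2\pi)^{k}}{(k!)^2},\qquad |\alpha_k(t)|\leq \frac{t^{2k-1}}{(2k-1)(2\pi)^{k-1}((k-1)!)^2},
\]
as indicated before the statement of the lemma; the approximations $e^t-1\approx t$, $1-e^{-t}\approx t$, and $e^t+e^{-t}-2\approx t^2$ introduce, for $t\leq\delta$, only higher order corrections of the same structure that can be absorbed into the tail error. This identifies the correct envelope for the bootstrap: roughly $\tfrac{1}{50}\beta_k^{\mathrm{lin}}(t)\leq \beta_k(t)\leq 2\beta_k^{\mathrm{lin}}(t)$ and $|\alpha_k(t)|\leq 2\alpha_k^{\mathrm{lin}}(t)$.

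Third, the heart of the argument is showing that under these a priori bounds the two tail sums contribute only a small fraction of the leading term $\frac{t}{\pi(k-1)}\beta_{k-1}$ in $\frac{d}{dt}\beta_k$. Using $|a^i_{jk}|,|b^i_{jk}|\leq 2j^{j-k}$ together with $|\alpha_j|+|\beta_j|\lesssim t^{2j-1}/((j!)^2(2\pi)^{j-1})$, the key estimate is the same ratio bound that powered Lemma 2.3:
\[
\sum_{j\geq k+1}\frac{|\alpha_j|+|\beta_j|}{\beta_{k-1}}\,j^{j-k}\;\lesssim\;\sum_{j\geq k+1}\frac{t^{2(j-k)+1}}{(2\pi)^{j-k}}\cdot\frac{((k-1)!)^2}{(j!)^2}\,j^{j-k}\;\lesssim\;\frac{t}{k^2},
\]
using the geometric-type decay $\frac{j^j}{(j!)^2}\leq \frac{(k+1)^{k+1}}{((k+1)!)^2}\cdot 2^{-(j-k-1)}$ and the resulting summability bound from the claim in Lemma 2.3. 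Choosing $\delta$ small enough so that this quantity is at most $1/4$ gives $\frac{d}{dt}\beta_k\geq \frac{3}{4}\cdot\frac{t}{\pi(k-1)}\beta_{k-1}$, which integrates to the desired lower bound and closes the bootstrap on $\beta_k$; an entirely analogous (but one-sided) calculation controls $|\alpha_k|$ from above so that the envelope on $\alpha_k$ is preserved as well.

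The main obstacle I expect is exactly the coupling, in particular the fact that the $\alpha$-equation contains the sign-indefinite drift $-\frac{t}{\pi(k-1)}(\alpha_{k-1}+\beta_{k-1})$ and that the right-hand side of both equations mixes $\alpha$'s and $\beta$'s through the tail coefficients. This raises the possibility that $\alpha_k$ becomes large and negative and, through the tail, cancels the positive growth of $\beta_k$. The bootstrap must therefore be performed simultaneously on both sequences, with the upper bound on $|\alpha_k|$ playing a crucial role in controlling the $\beta$-tail. The redeeming feature is structural: in the leading-order $\beta$-equation the $\alpha$'s never appear, and at the level of the tails their contribution is suppressed by the same $j^{j-k}/((j!)^2)$ factor that made Lemma 2.3 work, so the same small-$t$ threshold $\delta$ suffices. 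Once both inequalities are closed for the truncated system uniformly in $N$, the standard diagonal extraction yields the stated lower bound for the full infinite system.
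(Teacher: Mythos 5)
Your proposal takes essentially the same route as the paper: truncate the system, propagate a two-sided envelope on $\beta_k$ (and a bound on $\alpha_k$) around the unperturbed solution via a bootstrap that mirrors the Claim in Lemma~2.3, and pass to the limit by diagonal extraction. The paper states the bootstrap with two-sided bounds on both $\alpha_k$ and $\beta_k$ and leaves the verification to the reader, whereas you control only $|\alpha_k|$ from above; this is effectively equivalent here since $\alpha_k(0)=0$ and $\frac{d}{dt}\alpha_k$ is dominated by the positive term $\frac{1}{\pi(k-1)}\beta_{k-1}$ for small $t$, so $\alpha_k\geq 0$ on the bootstrap interval and the two conventions agree.
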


The proof of the lemma is similar to the proof of Lemma 2.3 in that we just need to establish the following bootstrap estimate.

\emph{Bootstrap Estimate:}

Suppose that $\alpha_k$ and $\beta_k$ satisfy the following estimates:  $$100\frac{(\frac{t^2}{2\pi})^k}{k!^2}\geq \beta_{k}(t) \geq\frac{1}{100}\frac{(\frac{t^2}{2\pi})^k}{k!^2}$$ 
and $$200\frac{t^{2k-1}}{(2k-1)(2\pi)^{k-1}(k-1)!^2} \geq \alpha_{k}(t)\geq \frac{1}{200}\frac{t^{2k-1}}{(2k-1)(2\pi)^{k-1}(k-1)!^2}$$

on an interval of time $[0,\delta].$

\emph{Then,} a better estimate holds:

$$50\frac{(\frac{t^2}{2\pi})^k}{k!^2}\geq \beta_{k}(t) \geq\frac{1}{50}\frac{(\frac{t^2}{2\pi})^k}{k!^2}$$ 
and $$100\frac{t^{2k-1}}{(2k-1)(2\pi)^{k-1}(k-1)!^2} \geq \alpha_{k}(t)\geq \frac{1}{100}\frac{t^{2k-1}}{(2k-1)(2\pi)^{k-1}(k-1)!^2}$$ on $[0,\delta]$ so long as $\delta$ is small enough.

This is proven in exactly the same way as the claim in Lemma 2.3 was proven and we leave it to the reader.

Now that $\beta$ and $\alpha$ are of the order of $\frac{t^{2k}}{k!^2}$, Proposition 3.1 follows using Lemma 2.4.

\section{Acknowledgements}

The author thanks Professors P. Constantin and N. Masmoudi for fruitful discussions. He also acknowledges funding from an NSF Postdoctoral Fellowship.

\end{document}